\newtheorem{theorem}{Theorem}
\newtheorem{lemma}[theorem]{Lemma}
\newtheorem{proposition}[theorem]{Proposition}
\theoremstyle{definition}
\newtheorem{definition}[theorem]{Definition}
\theoremstyle{remark}
\newtheorem{remark}[theorem]{\bf Remark}
\numberwithin{equation}{section}
\numberwithin{theorem}{section}
\newcommand{\intav}[1]{\mathchoice {\mathop{\vrule width 6pt height 3 pt depth  -2.5pt
\kern -8pt \intop}\nolimits_{\kern -6pt#1}} {\mathop{\vrule width
5pt height 3  pt depth -2.6pt \kern -6pt \intop}\nolimits_{#1}}
{\mathop{\vrule width 5pt height 3 pt depth -2.6pt \kern -6pt
\intop}\nolimits_{#1}} {\mathop{\vrule width 5pt height 3 pt depth
-2.6pt \kern -6pt \intop}\nolimits_{#1}}}
\newcommand{\intavl}[1]{\mathchoice {\mathop{\vrule width 6pt height 3 pt depth  -2.5pt
\kern -8pt \intop}\limits_{\kern -6pt#1}} {\mathop{\vrule width 5pt
height 3  pt depth -2.6pt \kern -6pt \intop}\nolimits_{#1}}
{\mathop{\vrule width 5pt height 3 pt depth -2.6pt \kern -6pt
\intop}\nolimits_{#1}} {\mathop{\vrule width 5pt height 3 pt depth
-2.6pt \kern -6pt \intop}\nolimits_{#1}}}
\newcommand{\R}{\mathbb{R}}
\newcommand{\N}{\mathbb{N}}
\newcommand{\Z}{\mathbb{Z}}
\newcommand{\al}{\alpha}
\newcommand{\be}{\beta}
\newcommand{\la}{\lambda}
\newcommand{\ve}{{\varepsilon}}
\begin{document}

\title[A Marstrand theorem for subsets of integers]{A Marstrand theorem for subsets of integers}

\author[Yuri Lima]{Yuri Lima}
\address{Department of Mathematics\\
University of Maryland\\
College Park, MD 20742, USA.}
\email{yurilima@gmail.com}

\author[Carlos Gustavo Moreira]{Carlos Gustavo Moreira}
\address{Instituto Nacional de Matem\'atica Pura e Aplicada, Estrada Dona Castorina 110, 22460-320, Rio de Janeiro, Brasil.}
\email{gugu@impa.br}
\

\subjclass[05A99, 37E15]{05A99, 37E15}

\date{\today}

\keywords{Marstrand's theorem, zero density, counting dimension.}

\begin{abstract}
We propose a counting dimension for subsets of $\Z$ and prove that, under certain conditions
on $E,F\subset\Z$, for Lebesgue almost every $\lambda\in\R$ the counting dimension of
$E+\lfloor\lambda F\rfloor$ is at least the minimum between $1$ and the sum of the counting
dimensions of $E$ and $F$. Furthermore, if the sum of the counting dimensions of $E$ and $F$ is larger than $1$,
then $E+\lfloor\lambda F\rfloor$ has positive upper Banach density for
Lebesgue almost every $\lambda\in\R$. The result has direct consequences when $E,F$ are arithmetic
sets, e.g. the integer values of a polynomial with integer coefficients.
\end{abstract}

\maketitle

\section{Introduction}

The purpose of this paper is to prove a Marstrand type theorem for a class of subsets of the integers.

The well-known theorem of Marstrand \cite{Ma} on geometric measure theory states the following:
if $K\subset\R^2$ is a Borel set
then, for almost every direction, its projection to $\R$ in the respective direction has Hausdorff dimension
equal to the minimum between $1$ and the Hausdorff dimension of $K$; if in addition $K$ has Hausdorff dimension
greater than one, then almost every such projection has positive Lebesgue measure. When $K=K_1\times K_2$ for
$K_1,K_2\subset\mathbb R$, the projections are affine images of the {\it arithmetic sum}
$$K_1+\la K_2=\{x+\la y:x\in K_1,y\in K_2\}$$
and Marstrand's theorem states that $K_1+\la K_2$ has the aforementioned
properties for Lebesgue almost every $\la\in\R$.

The investigation of such {\it arithmetic sums} is an active area of Mathematics,
specially because of its applications in various fields, e.g. diophantine approximations and
dynamical bifurcations.

Given $E\subset\Z$, let $d^*(E)$ denote its {\it upper Banach density}
$$d^*(E)=\limsup_{|I|\rightarrow\infty}\dfrac{|E\cap I|}{|I|}\,,$$
where $I$ runs over all intervals of $\Z$. A remarkable result in
additive combinatorics is Szemer\'edi's theorem \cite{Sz}: if $d^*(E)>0$, then $E$
contains arbitrarily long arithmetic progressions. One can interpret this result by saying that
density represents the correct notion of largeness needed to preserve finite configurations of $\Z$.

Szemer\'edi's theorem does not apply to subsets of zero upper Banach
density. Many of these sets are of interest and they may, as well, contain
combinatorially rich patterns. For example, sets formed by the integer values of a
polynomial with integer coefficients have special interest
in ergodic theory and its connections with combinatorics \cite{BL,Bo}.
Another example are the prime numbers: they have zero density (by the prime number theorem),
and yet there are arbitrarily long arithmetic progressions of primes \cite{GT}.

A set $E\subset\Z$ of zero upper Banach density occupies portions in intervals of $\Z$
that grow sublinearly as the length of the intervals grow, but there still may exist
some sublinear growth speed, e.g. the number of perfect squares on $(0,n]$ is
about $n^{0.5}$. This exponent represents, in some sense, a dimension of $\{n^2:n\in\Z\}$ inside $\Z$.
In this article, we propose a {\it counting dimension}
$$D(E)=\limsup_{|I|\rightarrow\infty}\dfrac{\log{|E\cap I|}}{\log{|I|}}\,,$$
where $I$ runs over all intervals of $\Z$. This definition captures the growth rate of $|E\cap I|$, and it
allows us to compare largeness between sets of zero upper Banach density.

Up to now, a theory of {\it fractal sets} in $\Z$ has not been developed, and this is the motivation for
this article. We address the following question: to what extent fractal sets in $\Z$ satisfy
a Marstrand like theorem? By fractal set we mean a set $E\subset\Z$ with $D(E)<1$. For example,
if $p\in\Z[x]$ has degree $d>0$, then $E=\{p(n)\}_{n\in\N}$ has counting dimension $\frac{1}{d}$
(see Section~\ref{sub polynomials}). The first main result of this paper is that such
{\it polynomial sets} satisfy a Marstrand like theorem.

\begin{theorem}\label{thm polynomial}
Let $p_i\in\Z[x]$ with degree $d_i>0$, and let $E_i=\{p_i(n)\}_{n\in\Z}$. Then
$$D(E_0+\lfloor\la_1 E_1\rfloor+\cdots+\lfloor\la_k E_k\rfloor)\ge\min
\left\{1,\dfrac{1}{d_0}+\dfrac{1}{d_1}+\cdots+\dfrac{1}{d_k}\right\}$$
for Lebesgue almost every $\la=(\la_1,\ldots,\la_k)\in\R^k$. If $\sum_{i=0}^k\frac{1}{d_i}>1$,
then $E_0+\lfloor\la_1 E_1\rfloor+\cdots+\lfloor\la_k E_k\rfloor$ has positive
upper Banach density for Lebesgue almost every $\la=(\la_1,\ldots,\la_k)\in\R^k$.
\end{theorem}

Theorem \ref{thm polynomial} is consequence of a more general result, which is the main result of this paper.
It identifies sufficient conditions for a Marstrand theorem on $\Z$ to hold.

\begin{theorem}\label{main thm 1}
Let $E,F\subset\Z$ be regular compatible sets. Then
$$D(E+\lfloor\lambda F\rfloor)\ge\min\{1,D(E)+D(F)\}$$
for Lebesgue almost every $\lambda\in\R$. If $D(E)+D(F)>1$, then
$E+\lfloor\lambda F\rfloor$ has positive upper Banach density for Lebesgue almost
every $\lambda\in\R$.
\end{theorem}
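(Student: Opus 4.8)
The plan is to carry out the discrete analogue of Marstrand's $L^{2}$ (energy) argument. First I would reduce to a finite‑scale statement. By the symmetry $\lambda\leftrightarrow-\lambda$ it is enough to fix a compact interval $[a,b]\subset(0,\infty)$, and since $\lfloor\lambda f\rfloor=\lambda f+O(1)$ the floor is harmless at scale $N$. Write $E_{N}=E\cap[1,N]$, $F_{N}=F\cap[1,N]$ and $S_{N}(\lambda)=E_{N}+\lfloor\lambda F_{N}\rfloor$. Because $S_{N}(\lambda)\subset\big(E+\lfloor\lambda F\rfloor\big)\cap[1,(1+b)N]$, monotonicity of the scale‑$M$ counting function of a set gives $D\big(E+\lfloor\lambda F\rfloor\big)\ge\limsup_{N}\frac{\log|S_{N}(\lambda)|}{\log N}$, and likewise $|S_{N}(\lambda)|\ge\delta N$ for infinitely many $N$ forces $d^{*}(E+\lfloor\lambda F\rfloor)\ge\delta/(1+b)$. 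By regularity $|E_{N}|=N^{D(E)+o(1)}$ and $|F_{N}|=N^{D(F)+o(1)}$, and compatibility should allow all the estimates below to be run along a common sequence of scales. Writing $D'=\min\{1,D(E)+D(F)\}$, the theorem reduces to: for a.e.\ $\lambda\in[a,b]$ and every $\epsilon>0$, $|S_{N}(\lambda)|\ge N^{D'-\epsilon}$ for infinitely many $N$; and, when $D(E)+D(F)>1$, $|S_{N}(\lambda)|\ge\delta_{[a,b]}N$ for infinitely many $N$ on a full‑measure set of $\lambda$.

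The tool is Cauchy--Schwarz: with $r_{\lambda,N}(k)=\#\{(e,f)\in E_{N}\times F_{N}:e+\lfloor\lambda f\rfloor=k\}$ and $Q_{N}(\lambda)=\sum_{k}r_{\lambda,N}(k)^{2}$ (the number of additive quadruples), $|S_{N}(\lambda)|\ge|E_{N}|^{2}|F_{N}|^{2}/Q_{N}(\lambda)$. The crux is the energy estimate
\[
\int_{a}^{b}Q_{N}(\lambda)\,d\lambda\ \ll\ N^{o(1)}\,|E_{N}||F_{N}|\ +\ \frac{|E_{N}|^{2}|F_{N}|^{2}}{N},
\]
with implied constants depending only on $[a,b]$, and in which the second term carries \emph{no} $N^{o(1)}$ loss and dominates whenever $D(E)+D(F)>1$. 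To prove it, split $Q_{N}(\lambda)$ by whether $f_{1}=f_{2}$ (which forces $e_{1}=e_{2}$, contributing exactly $|E_{N}||F_{N}|$, independently of $\lambda$) or $f_{1}\ne f_{2}$. For the off‑diagonal part one uses the elementary fact that for fixed $f_{1}\ne f_{2}$ and fixed $c$ the set $\{\lambda\in[a,b]:\lfloor\lambda f_{1}\rfloor-\lfloor\lambda f_{2}\rfloor=c\}$ has measure $\ll 1/|f_{1}-f_{2}|$, while over $c$ these sets are pairwise disjoint. Summing over $e_{1},e_{2},f_{1},f_{2}$ and invoking the regularity of $E$ in the Frostman form $\#\{(e_{1},e_{2})\in E_{N}^{2}:|e_{1}-e_{2}|\le t\}\ll|E_{N}|^{2}(t/N)^{D(E)}$, the off‑diagonal contribution is $\ll|E_{N}|^{2}N^{-D(E)}\sum_{\delta\ne0}r_{F-F}(\delta)\,|\delta|^{D(E)-1}$, where $r_{F-F}$ counts differences in $F_{N}$. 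Grouping $\delta$ dyadically and applying the same Frostman bound to $F$, this is a geometric series of ratio $2^{D(E)+D(F)-1}$: for $D(E)+D(F)<1$ it converges (dominated by $|\delta|\asymp1$), giving off‑diagonal $\ll N^{o(1)}|E_{N}||F_{N}|$; for $D(E)+D(F)>1$ it is dominated by the top scale $|\delta|\asymp N$, and the powers $N^{-D(E)}$ and $N^{D(E)}$ cancel exactly, giving off‑diagonal $\ll|E_{N}|^{2}|F_{N}|^{2}/N$; at the borderline $D(E)+D(F)=1$ one loses only a factor $\log N=N^{o(1)}$.

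To extract pointwise information, Markov's inequality gives $\mathrm{meas}\{\lambda\in[a,b]:|S_{N}(\lambda)|<t\}\le t\,(|E_{N}|^{2}|F_{N}|^{2})^{-1}\int_{a}^{b}Q_{N}\,d\lambda\ll t\,N^{-D'+o(1)}$. With $t=N^{D'-\epsilon}$ the right side is $N^{-\epsilon+o(1)}$; restricting $N$ to a geometric subsequence $N_{j}\asymp(1+\eta)^{j}$ makes these summable, so Borel--Cantelli together with monotonicity of the scale‑counting function yields $D(E+\lfloor\lambda F\rfloor)\ge D'-\epsilon$ for a.e.\ $\lambda$, hence the first assertion on letting $\epsilon\downarrow0$. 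When $D(E)+D(F)>1$ one has the clean bound $\int_{a}^{b}Q_{N}\,d\lambda\ll|E_{N}|^{2}|F_{N}|^{2}/N$, so taking $t=\delta N$ gives $\mathrm{meas}\{\lambda\in[a,b]:|S_{N}(\lambda)|<\delta N\}\le C_{[a,b]}\,\delta$ uniformly in $N$; choosing $\delta=\delta_{[a,b]}$ with $C_{[a,b]}\delta\le(b-a)/2$, the sets $A_{N}=\{\lambda\in[a,b]:|S_{N}(\lambda)|\ge\delta N\}$ satisfy $\mathrm{meas}(A_{N})\ge(b-a)/2$, whence by the reverse Fatou lemma $\mathrm{meas}(\limsup_{N}A_{N})\ge(b-a)/2$, and every $\lambda$ in $\limsup_{N}A_{N}$ has $d^{*}(E+\lfloor\lambda F\rfloor)>0$. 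Since this lower bound on the measure of the good set holds on \emph{every} subinterval $[a,b]$, the set of $\lambda$ with $d^{*}(E+\lfloor\lambda F\rfloor)=0$ has no Lebesgue density point, hence is null; this is the second assertion.

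The step I expect to be hardest is the off‑diagonal energy estimate: distilling from the precise notions of \emph{regular} and \emph{compatible} exactly the Frostman/Ahlfors‑type control on close pairs used above -- for $E$, for $F$, and simultaneously along one sequence of scales -- together with the bookkeeping at the borderline $D(E)+D(F)=1$ and the passage from the finite sumsets $S_{N}(\lambda)$ back to the counting dimension and upper‑Banach density of the infinite set $E+\lfloor\lambda F\rfloor$.
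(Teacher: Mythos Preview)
Your approach is essentially the paper's: the same transversality bound $m\{\lambda:\lfloor\lambda f_1\rfloor-\lfloor\lambda f_2\rfloor=c\}\ll|f_1-f_2|^{-1}$, the same second-moment estimate on $\int Q_N\,d\lambda$ via dyadic grouping and the Ahlfors upper bound, and the same Cauchy--Schwarz/Chebyshev conclusion.

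One correction is needed. Regularity does \emph{not} give $|E\cap[1,N]|=N^{D(E)+o(1)}$: it gives $|E\cap I|\ll|I|^{D(E)}$ for \emph{every} interval $I$ (this is the Frostman upper bound you use in the off-diagonal estimate), but the matching lower bound holds only along a sequence of intervals, and compatibility is precisely the hypothesis that such sequences $(I_n)$ for $E$ and $(J_n)$ for $F$ can be taken with $|I_n|\asymp|J_n|$. You must replace $E_N=E\cap[1,N]$, $F_N=F\cap[1,N]$ by $E\cap I_n$, $F\cap J_n$ throughout; once you do, the energy bounds become $\Delta_n\ll|I_n|^{\alpha+\beta}$ (subcritical) and $\Delta_n\ll|I_n|^{2\alpha+2\beta-1}$ (supercritical) with honest constants, no $N^{o(1)}$ loss.

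Two simplifications the paper makes relative to your wrap-up. Since counting dimension and upper-Banach density are $\limsup$s, one only needs infinitely many good scales; the paper therefore takes $G_\epsilon=\bigcap_n\bigcup_{l\ge n}G_\epsilon^l$ (measure $\ge m(\Lambda)-\epsilon$) and lets $\epsilon\to0$, which avoids both Borel--Cantelli along a geometric subsequence and your Lebesgue-density-point argument in the supercritical case. On the other hand, your $\log N$ treatment of the borderline $\alpha+\beta=1$ is correct and arguably simpler than the paper's, which instead passes to a regular subset $E'\subset E$ with $D(E')$ slightly below $\alpha$ and invokes the subcritical case.
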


Let us briefly explain the notions of regularity and compatibility. The counting dimension
says that $|E\cap I|$, along a sequence of intervals $I$, grows like $|I|^{D(E)}$,
with a small error on the exponent $D(E)$. We say that $E$ is {\it regular} when there is
no error at all, i.e. when the cardinality of $|E\cap I|$ is, up to a multiplicative constant,
of the order of $|I|^{D(E)}$. See Section~\ref{sub regular sets} for the definition.

Now let $E,F$ be two regular sets: there are intervals $I$ such that $|E\cap I|$ has order $|I|^{D(E)}$
and intervals $J$ such that $|F\cap J|$ has order $|J|^{D(F)}$. In general, $|I|$ and $|J|$ are incomparable.
We say that $E$ and $F$ are {\it compatible} if $|I|$ and $|J|$ are asymptotic.
See Section~\ref{sub regular sets} for the definition.

The quantities $d^*(E)$ and $D(E)$ are similar to the
Lebesgue measure and box dimension on $\R$. It is because of this association that
we call Theorem \ref{main thm 1} a Marstrand theorem for subsets of integers.
Most results of this paper were motivated by known facts in geometric measure theory.
We will try to refer to these facts.

The notions of regularity and compatibility are both satisfied by many arithmetic
subsets of $\Z$, e.g. the integer values of a polynomial with integer coefficients and,
more generally, by {\it universal} sets: those sets that exhibit the expected growth rate along
intervals of arbitrary length (see Definition~\ref{def universal}). For these sets,
Theorem~\ref{main thm 1} can be inductively applied to give the result below.

\begin{theorem}\label{main thm 2}
Let $E_0,\ldots,E_k$ be universal subsets of $\Z$. Then
$$D(E_0+\lfloor\la_1 E_1\rfloor+\cdots+\lfloor\la_k E_k\rfloor)\ge\min
\left\{1,D(E_0)+D(E_1)+\cdots+D(E_k)\right\}$$
for Lebesgue almost every $\la=(\la_1,\ldots,\la_k)\in\R^k$. If $\sum_{i=0}^k D(E_i)>1$,
then $E_0+\lfloor\la_1 E_1\rfloor+\cdots+\lfloor\la_k E_k\rfloor$ has positive
upper Banach density for Lebesgue almost every $\la=(\la_1,\ldots,\la_k)\in\R^k$.
\end{theorem}

Integer values of a polynomial with integer coefficients are universal sets, thus
Theorem \ref{thm polynomial} follows from Theorem \ref{main thm 2}.

The proof of Theorem \ref{main thm 1} is based on the ideas developed in \cite{LM,LM2}.
The cardinality of a regular subset of $\Z$
along an increasing sequence of intervals exhibits an exponential behavior ruled out by its counting
dimension. If this holds for two regular subsets $E,F\subset\Z$, the compatibility assumption allows
to estimate the cardinality of $E+\lfloor\la F\rfloor$ along the respective arithmetic
sums of intervals. A double-counting argument estimates the size of the ``bad'' parameters for
which such cardinality is small.

The paper is organized as follows. In \S\ref{section preliminaries} we provide basic notations
and definitions. In \S\ref{section examples} we discuss some examples, including the sets given by
integer values of a polynomial with integer coefficients.
In \S\ref{section regularity and compatibility} we introduce the notions
of regularity and compatibility. In \S\ref{sub counterexample} we construct a counterexample to
Theorem~\ref{main thm 1} when the sets are not compatible (thus regularity and compatibility
are not only sufficient but also necessary conditions for the validity of Theorem~\ref{main thm 1}).
In \S\ref{sub counterexample 2} we construct a counterexample to Theorem~\ref{main thm 1} when the space of
parameters is $\Z$ (thus $\R$ is the correct space of parameters).
In \S\ref{section marstrand thm} we prove Theorems~\ref{main thm 1} and \ref{main thm 2}.
We also collect some final remarks and questions in \S\ref{section final remarks}.

\section{Preliminaries}\label{section preliminaries}

\subsection{General notation}

Given a set $X$, $|X|$ denotes the cardinality of $X$.
$\Z$ denotes the set of integers and $\N$ the set of positive integers.

\begin{definition}\label{def vinogradov}
Let $f,g:\Z$ or $\N\rightarrow\R$. We write $f\lesssim g$ if there is $C>0$ such that
$$|f(n)|\le C|g(n)|\,,\ \ \forall\,n\in\Z\text{ or }\N.$$
If $f\lesssim g$ and $g\lesssim f$, we write $f\sim g$. We write $f\approx g$ if
$$\lim_{|n|\rightarrow\infty}\dfrac{f(n)}{g(n)}=1.$$
\end{definition}

Given $x\in\R$, $\lfloor x\rfloor$ is the integer part of $x$. For $k\ge 1$, $m_k$
is the Lebesgue measure of $\R^k$. Let $m=m_1$. The letter $I$ denotes
an interval of $\Z$, e.g. $I=(M,N]=\{M+1,\ldots,N\}.$
The {\it length} of $I$ is $|I|=N-M$.

Given $E\subset\Z$ and $\la\in\R$, let $\la E=\{\la n:n\in E\}\subset\R$ and
$\lfloor\la E\rfloor=\{\lfloor\la n\rfloor:n\in E\}\subset\Z$.

\subsection{Counting dimension}\label{sub definitions}

\begin{definition}\label{def density}
The {\it upper Banach density} of $E\subset\Z$ is
$$d^*(E)=\limsup_{|I|\rightarrow\infty}\dfrac{|E\cap I|}{|I|}\,,$$
where $I$ runs over all intervals of $\Z$.
\end{definition}

\begin{definition}\label{def dimension}
The {\it counting dimension} or simply {\it dimension} of $E\subset \Z$ is
$$D(E)=\limsup_{|I|\rightarrow\infty}\dfrac{\log{|E\cap I|}}{\log{|I|}}\,,$$
where $I$ runs over all intervals of $\Z$.
\end{definition}

The above definition is similar to the box dimension on $\mathbb R$.
Similar definitions appeared in \cite{BF,Fu}.
Note that $D(E)\in[0,1]$, and if $d^*(E)>0$ then $D(E)=1$.

Here is an alternative definition of $D(E)$ that is
similar in spirit to the Hausdorff dimension on $\R$.
Let $\al$ be a nonnegative real number.

\begin{definition}\label{def alpha-dimension}
The {\it counting $\al$-measure} of $E\subset\Z$ is
$$H_\al(E)=\limsup_{|I|\rightarrow\infty}\dfrac{|E\cap I|}{|I|^\al}\,,$$
where $I$ runs over all intervals of $\Z$.
\end{definition}

Clearly, $H_\al(E)\in[0,\infty]$. For a fixed $E\subset\Z$, the numbers $H_\al(E)$ are decreasing
in $\al$. Furthermore,
$$\al<D(E)\ \Longrightarrow\ H_\al(E)=\infty\ \ \text{ and }\ \ \al>D(E)\ \Longrightarrow\ H_\al(E)=0\,,$$
thus there is a unique $\al\ge 0$ such that
$$
H_\be(E)=\left\{
\begin{array}{ll}
\infty&,\text{if }0\le \be<\al,\\
0&,\text{if }\be>\al.
\end{array}\right.
$$
Thus $D(E)=\al$, i.e. $D(E)$ is the parameter
$\al$ where $H_\al(E)$ decreases from infinity to zero.

Here is an analogue to Frostman's lemma (see Theorem 8.8 of \cite{Mattila}):
if $\be>D(E)$, then
\begin{equation}\label{eq 21}
|E\cap I|\lesssim |I|^\be,
\end{equation}
where $I$ runs over all intervals of $\Z$. Conversely, if (\ref{eq 21}) holds, then $D(E)\le\be$.

Below we collect some basic properties of $D$ and $H_\al$. All proofs are direct.
\begin{enumerate}[(i)]
\item If $E\subset F$, then $D(E)\le D(F)$.
\item $D(E\cup F)=\max\{D(E),D(F)\}$.
\item If $\la\ge 1$, then
\begin{align}\label{eq 14}
H_\al(\lfloor\la E\rfloor)=\la^{-\al}H_\al(E).
\end{align}
\end{enumerate}

\begin{remark}
$\lfloor -x\rfloor=-\lfloor x\rfloor$ or $\lfloor -x\rfloor=-\lfloor x\rfloor-1$, thus
$D(\lfloor-\la E\rfloor)=D(\lfloor\la E\rfloor)$. Also,
$0<H_\al(\lfloor-\la E\rfloor)<\infty$ iff $0<H_\al(\lfloor\la E\rfloor)<\infty$. So we
assume, from now on, that $\la>0$.
\end{remark}

\section{Examples}\label{section examples}

\noindent{\bf Example 1.} Let $\al\in(0,1]$, and let
$E_\al=\left\{\left\lfloor n^{1/\al}\right\rfloor:n\in\N\right\}$.
We claim that $H_\al(E_\al)=1$. Because\footnote{For each $t\ge 0$, the function $x\in[0,t]\mapsto x^\al+(t-x)^\al$
is concave, so it attains its minima at $x=0$ and $x=t$. Thus $x^\al+(t-x)^\al\ge t^\al$ for
any $x\in[0,t]$.}
$(x+y)^\al\le x^\al+y^\al$ for $x,y\ge 0$, we have
$$\dfrac{|E_\al\cap(M,N]|}{(N-M)^\al}\le\dfrac{(N+1)^\al-(M+1)^\al}{(N-M)^\al}\le 1,$$
thus $H_\al(E_\al)\le 1$. On the other hand, $\frac{|E_\al\cap(0,N]|}{N^\al}\ge\frac{N^\al-1}{N^\al}$,
thus $H_\al(E_\al)\ge 1$.\\

\noindent{\bf Example 2.} The prime numbers have dimension one. This follows from the prime number theorem:
$$\lim_{n\rightarrow\infty}\dfrac{\log|\{1\le p\le n:p\text{ is prime}\}|}{\log n}=
\lim_{n\rightarrow\infty}\dfrac{\log n-\log\log n}{\log n}=1.$$

\subsection{Polynomial subsets of $\Z$}\label{sub polynomials}

\begin{definition}
A {\it polynomial set of $\Z$} is a set $E=\{p(n):n\in\N\}$, where $p(x)\in\Z[x]$ is non-constant.
\end{definition}

These are the sets we consider in Theorem \ref{thm polynomial}. Let us calculate their counting dimension.
Call $E,F\subset\Z$ {\it asymptotic} if $E=\{\cdots<a_{-1}<a_0<a_1<\cdots\}$,
$F=\{\cdots<b_{-1}<b_0<b_1<\cdots\}$ and there is $i\ge0$ such that
\begin{equation}\label{eq 16}
a_{n-i}\le b_n\le a_{n+i}\ ,\ \ \text{for all }n\in\Z.
\end{equation}

\begin{lemma}\label{lemma asymptotic sets}
If $E,F$ are asymptotic and $\al>0$, then $H_\al(E)=H_\al(F)$. In particular, $D(E)=D(F)$.
\end{lemma}

\begin{proof}
Let $I=(M,N]$, and let $E\cap I=\{a_{m+1},a_{m+2},\ldots,a_n\}$. By (\ref{eq 16}),
$$\{b_{m+i+1},\ldots,b_{n-i}\}\subset F\cap I\subset\{b_{m-i},\ldots,b_{n+i+1}\},$$
thus $|E\cap I|\approx |F\cap I|$, and so $H_\al(E)=H_\al(F)$.
\end{proof}

Let $E=\{p(n):n\in\N\}$, where $p$ has degree $d$. Assume that $p$ has leading coefficient $a\ge 1$.
Thus there is $i\ge 0$ such that $a(n-i)^d<p(n)<a(n+i)^d$ for sufficiently large $n\in\Z$, so
$E,aE_{\frac{1}{d}}$ are asymptotic, where $E_{\frac{1}{d}}$ is defined as in Example 1. By equality
(\ref{eq 14}) and Lemma \ref{lemma asymptotic sets}, it follows that $D(E)=\frac{1}{d}$ and
that $H_{\frac{1}{d}}(E)=a^{-\frac{1}{d}}$.

\subsection{Cantor sets in $\Z$}\label{sub Cantor}

The classical ternary Cantor set of $\R$ is the set of real numbers on $[0,1]$ with only $0$'s and
$2$'s on the expansion in base 3. In analogy to this, define $E\subset\Z$ as
\begin{equation}\label{eq 20}
E=\left\{\sum_{i=0}^n a_i3^i:n\in\N\text{ and }a_i\in\{0,2\}\right\}.
\end{equation}
Fisher proved in \cite{Fi} that $H_{\frac{\log 2}{\log 3}}(E)>0$. In Lemma \ref{lemma dim=perron} below,
we will prove that $H_{\frac{\log 2}{\log 3}}(E)<\infty$. In particular, $D(E)=\frac{\log 2}{\log 3}$.

The renormalization of $E\cap(0,3^n)$ via the map $x\mapsto\frac{x}{3^n}$ is a subset of $(0,1)$
equal to the set of left endpoints of the remaining intervals of the $n$-th step of the construction of the
classical ternary Cantor set of $\R$, i.e. if $K=\bigcup_{n\in E}[n,n+1]$, then $\frac{K}{3^n}$ is the
$n$-th step of the construction of the ternary Cantor set of $\R$.

More generally, let us define a class of Cantor sets in $\Z$. Fix $a\in\N$ and a
binary matrix $A=(a_{ij})_{0\le i,j\le a-1}$. For $n\ge 0$, let
$$\Sigma_n(A)=\left\{(d_0d_1\cdots d_n):a_{d_{i-1}d_i}=1,\ 1\le i\le n\right\},$$
and let $\Sigma^*(A)=\bigcup_{n\ge 0}\Sigma_n(A)$.

\begin{definition}\label{def cantor set}
The {\it integer Cantor set} $E_A\subset\Z$ induced by $A$ is
$$E_A=\{d_0a^0+\cdots+d_na^n:(d_0d_1\cdots d_n)\in\Sigma^*(A)\}.$$
\end{definition}

Here is our motivation for Definition \ref{def cantor set}: dynamically defined topologically mixing Cantor sets
of the real line are homeomorphic to subshifts of finite type (see e.g. \cite{LM}). After truncating the numbers,
this is exactly what Definition \ref{def cantor set} does.

Remember that the {\it Perron-Frobenius eigenvalue} of $A$ is its largest eigenvalue $\la_+(A)$.
It has multiplicity one and maximizes the absolute value of the eigenvalues of $A$. Furthermore,
there is $c=c(A)>0$ such that
\begin{equation}\label{eq 17}
c^{-1}{\la_+(A)}^n\le|\Sigma_n(A)|\le c{\la_+(A)}^n\,, \ \ \text{for all }n\ge 0.
\end{equation}
See e.g. \cite{Kit}. The dimension of $E_A$ depends explicitly on $a$ and on $\la_+(A)$.

\begin{lemma}\label{lemma dim=perron}
If $A$ is a binary $a\times a$ matrix, then
$$D(E_A)=\dfrac{\log \la_+(A)}{\log a}\ \ \text{ and }\ \ 0<H_{\frac{\log\la_+(A)}{\log a}}(E_A)<\infty\,.$$
\end{lemma}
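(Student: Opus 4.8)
The plan is to establish the two-sided estimate by transporting the combinatorial growth of admissible words to the counting statistics of $E_A$ inside intervals. First I would set $\al=\log\la_+(A)/\log a$ and observe, via the representation of elements of $E_A$ in base $a$, that an element determined by a word $(d_0\cdots d_n)\in\Sigma_n(A)$ lies in the interval $[0,a^{n+1})$; more precisely, the elements coming from words of length exactly $n+1$ all lie in $[0,a^{n+1})$, and those coming from words of length $n+1$ whose top digit $d_n$ is the \emph{largest} admissible top digit lie in a band of the form $[d_n a^n, (d_n+1)a^n)$. The key point is a \emph{self-similar} structure: fixing the top digit $d_n=j$, the set of sums $\{d_0 a^0+\cdots+d_{n-1}a^{n-1}\,;\,(d_0\cdots d_{n-1}j)\in\Sigma_n(A)\}$ is (a subset of) a translate by $ja^n$ of the integer Cantor set truncated at level $n$, with cardinality governed by the number of admissible words of length $n$ ending in a prescribed symbol, which by Perron--Frobenius theory is again $\asymp \la_+(A)^n$.

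For the upper bound $H_\al(E_A)\le C$ I would argue as in the ternary example: given an interval $I=(M,N]$, shrink it so that its endpoints (shifted by $1$) are themselves in $E_A$, which only increases the quotient $|E_A\cap I|/|I|^\al$; write $M+1$ and $N$ in base $a$ and let $n$ be the top exponent appearing in $N$. Using the admissibility constraints one shows $|I|=N-M\gg a^{n}$ (after possibly translating $I$ to kill common high-order digits, exactly the maneuver used for the ternary set), while $|E_A\cap(0,N]|\le |\Sigma^*(A)\text{ restricted to length}\le n+1|\le \sum_{k\le n+1}|\Sigma_k|\ll \la_+(A)^{n}$ by \eqref{eq 17}. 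Dividing gives $|E_A\cap I|/|I|^\al\ll \la_+(A)^n/(a^n)^{\al}=1$ by the choice of $\al$, so $H_\al(E_A)<\infty$; consequently $D(E_A)\le\al$.

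For the lower bound $H_\al(E_A)>0$ I would exhibit, for each large $n$, an interval $I_n$ of length $\asymp a^n$ containing $\gg \la_+(A)^n$ elements of $E_A$. The natural candidate is a band $I_n=(j a^n-1,(j+1)a^n]$ where $j$ is an admissible top digit for which the count of admissible length-$n$ words with first symbol compatible and last symbol $j$ is maximal; by the lower bound in \eqref{eq 17} (applied to an irreducible block of $A$, or after replacing $A$ by a component realizing $\la_+(A)$) this count is $\gg \la_+(A)^n$, and all the corresponding sums $d_0a^0+\cdots+d_{n-1}a^{n-1}+j a^n$ lie in $I_n$ since the low-order part is $<a^n$. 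Hence $|E_A\cap I_n|/|I_n|^\al\gg\la_+(A)^n/a^{n\al}\asymp 1$, giving $H_\al(E_A)>0$ and $D(E_A)\ge\al$. Combining the two bounds with the equivalence $H_\be(E)=\infty\iff D(E)\ge\be$ from the preliminaries yields $D(E_A)=\al$ and $0<H_\al(E_A)<\infty$.

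\textbf{Main obstacle.} The delicate point is the bookkeeping in the lower bound when $A$ is not irreducible or has small entries: one must be careful that a positive proportion of admissible words can be prescribed to end in a fixed symbol $j$ \emph{and} that translating the corresponding partial sums into a single interval of length $\asymp a^n$ does not collapse distinct words to the same integer --- it does not, because the base-$a$ representation is unique, but phrasing this cleanly (and handling the choice of the Perron--Frobenius component so that \eqref{eq 17} applies with the genuine $\la_+(A)$) is where the real work lies. The rest is a routine adaptation of the ternary Cantor set computation already carried out in Subsection \ref{sub Cantor}.
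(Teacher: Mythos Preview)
Your overall strategy---translating the Perron--Frobenius count $|\Sigma_n|\asymp\la_+(A)^n$ into bounds on $|E_A\cap I|/|I|^\al$---matches the paper's, but the execution of the upper bound has a genuine gap, and the lower bound is more complicated than necessary.

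\medskip
\textbf{The gap in the upper bound.} Your claim that, after killing common high-order digits, one always has $|I|\gg a^n$ is false for general $A$. Take $a=3$, $A$ the full matrix, and
\[
M+1=1\cdot 3^n+2\cdot 3^{n-1}+\cdots+2\cdot 3^0=2\cdot 3^n-1,\qquad N=2\cdot 3^n.
\]
The top digits are $x_n=1$ and $y_n=2$, so there is nothing to translate away, yet $|I|=2$ while the top exponent is $n$. Your bound $|E_A\cap I|\le|E_A\cap(0,N]|\ll\la_+(A)^n$ then yields $|E_A\cap I|/|I|^\al\ll\la_+(A)^n/2^\al$, which is useless. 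The translation trick from the ternary example works only because there the sole nonzero digit is $2$, forcing the top exponents of $M+1$ and $N$ to eventually differ; for arbitrary $A$ the digits $x_n,y_n$ can be adjacent and the interval can be arbitrarily short relative to $a^n$. The paper handles exactly this situation by splitting into the cases $y_n\ge x_n+2$ (where indeed $|I|\ge a^n$) and $y_n=x_n+1$; in the latter it locates the indices $i,j<n$ at which $M+1$ first drops below $a-1$ and $N$ first rises above $0$, and shows $|I|\ge a^{\max\{i,j\}}$ together with $|E_A\cap I|\le|\Sigma_i|+|\Sigma_j|$. You need this second case; without it the argument is incomplete.

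\medskip
\textbf{The lower bound.} Here you are working too hard. There is no need to single out a band $(ja^n-1,(j+1)a^n]$ or to worry about prescribing the terminal symbol or about irreducible components. The paper simply takes $I=(0,a^n]$: every word of length at most $n$ gives an element of $E_A$ below $a^n$, so $|E_A\cap(0,a^n]|\ge|\Sigma_{n-1}|\ge c^{-1}\la_+(A)^{n-1}$ directly from the two-sided estimate~\eqref{eq 17}, and dividing by $(a^n)^\al=\la_+(A)^n$ gives a positive lower bound. The inequality~\eqref{eq 17} is taken as given for arbitrary binary $A$ (with the cited reference), so the irreducibility concern you flag does not arise in the paper's framework.
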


\begin{proof}
Let $I=(M,N]$. We may assume that $M+1,N\in E_A$, say
\begin{align*}
\left\{
\begin{array}{rcl}
M+1&=&x_0a^0+\cdots+x_na^n\\
N&=&y_0a^0+\cdots+y_na^n
\end{array}\right.,
\end{align*}
where $y_n>x_n$ (if $x_n=y_n$, then we can consider the translation of $I$ by $-x_na^n$).
If $y_n\ge x_n+2$, then
$$
\left\{\begin{array}{rcl}
M+1&\le&(x_n+1)a^n\\
N&\ge & (x_n+2)a^n
\end{array}\right.
\Longrightarrow\ |I|\ge a^n.
$$
Because $I\subset(0,a^{n+1})$, we get that
\begin{equation}\label{eq 18}
\dfrac{|E_A\cap I|}{|I|^{\frac{\log\la_+(A)}{\log a}}}\le\dfrac{|\Sigma_n(A)|}{a^{\frac{n\log\la_+(A)}{\log a}}}\le\dfrac{c{\la_+(A)}^n}{{\la_+(A)}^n}=c\,.
\end{equation}
If $y_n=x_n+1$, let $i,j\in\{0,1,\ldots,n-1\}$ such that
\begin{enumerate}[(i)]
\item $x_i<a-1$ and $x_{i+1}=\cdots=x_{n-1}=a-1$,
\item $y_j>0$ and $y_{j+1}=\cdots=y_{n-1}=0$.
\end{enumerate}
Thus
$$
\left\{\begin{array}{rcl}
M+1&\le& (x_n+1)a^n-a^i\\
N&\ge &(x_n+1)a^n+a^j
\end{array}\right.
\Longrightarrow\ |I|\ge a^i+a^j\ge a^{\max\{i,j\}}.
$$
If $\sum_{l=0}^n z_la^l\in I$, then necessarily $z_n\in\{x_n,x_n+1\}$. In the
first case $z_{i+1}=\cdots=z_{n-1}=a-1$, and in the second case $z_{j+1}=\cdots=z_{n-1}=0$. Thus
$$|E_A\cap I|\le |\Sigma_i(A)|+|\Sigma_j(A)|\le 2c{\la_+(A)}^{\max\{i,j\}},$$
and so
\begin{equation}\label{eq 19}
\dfrac{|E_A\cap I|}{|I|^{\frac{\log\la_+(A)}{\log a}}}\le
\dfrac{2c{\la_+(A)}^{\max\{i,j\}}}{a^{\frac{\max\{i,j\}\log\la_+(A)}{\log a}}}=2c\,.
\end{equation}
Estimates (\ref{eq 18}) and (\ref{eq 19}) give that $H_{\frac{\log\la_+(A)}{\log a}}(E_A)<\infty$. Furthermore,
$$\dfrac{|E_A\cap(0,a^n]|}{a^{\frac{n\log\la_+(A)}{\log a}}}\ge \dfrac{c^{-1}{\la_+(A)}^{n-1}}{{\la_+(A)}^n}=
c^{-1}\la_+(A)^{-1},$$
thus $H_{\frac{\log\la_+(A)}{\log a}}(E_A)>0$.
\end{proof}

Here is a direct application of Lemma \ref{lemma dim=perron}: if $X\subset\{0,\ldots,a-1\}$
and $A=(a_{ij})$ with $a_{ij}=1$ iff $i,j\in X$, then $D(E_A)=\frac{\log{|X|}}{\log a}$.

If $E,F\subset\Z$ with $D(E)+D(F)>1$, then it is not true in general that $d^*(E+F)>0$. This happens
if the elements of $E+F$ have many representations as the sum of one element of $E$ and other
of $F$. This resonance phenomenon might decrease the dimension of $E+F$. Fore example:
if $E=E_A$ and $F=E_B$, where $A=(a_{ij})_{0\le i,j\le 11}$,
$B=(b_{ij})_{0\le i,j\le 11}$ are defined by
$$a_{ij}=1\,\iff\,0\le i,j\le 3\ \ \text{ and }\ \ b_{ij}=1\,\iff\,4\le i,j\le 7,$$
then $D(E)+D(F)=\frac{2\log 4}{\log 12}$, while $E+F=E_C$ for $C=(c_{ij})_{0\le i,j\le 11}$ defined by
$$c_{ij}=1\,\iff\,4\le i,j\le 10.$$
$E+F$ has counting dimension equal to $\frac{\log 7}{\log 12}$, thus $d^*(E+F)=0$. What Theorem \ref{main thm 1}
gives is that resonance is avoided if we change the scales of the sets.

\subsection{Generalized IP-sets}\label{sub gap}

This class of sets was suggested to us by Simon Griffiths and Rob Morris.
Let $(k_n)_{n\ge 1}$, $(d_n)_{n\ge 1}$ be sequences of positive integers with $k_n\ge 2$.

\begin{definition}
The {\it generalized IP-set} associated to $(k_n)_{n\ge 1}$, $(d_n)_{n\ge 1}$ is the set
$$E=\left\{\sum_{i=1}^n x_id_i:n\in\N\text{ and }0\le x_i< k_i\right\}.$$
\end{definition}

We always assume that $d_n>\sum_{i=1}^{n-1}k_id_i$.
Thus the map $\sum_{i=1}^n x_id_i\mapsto (x_1,\ldots,x_n)$ is a bijection
from $E$ to $\left\{(x_1,\ldots,x_n):n\in\N, x_n>0\text{ and }\,0\le x_i< k_i\right\}$.
Also, if this former set is colexicographically ordered\footnote{The sequence $(x_1,\ldots,x_n)$
is smaller than $(y_1,\ldots,y_m)$ if $n<m$ or if there is $i\in\{1,\ldots,n\}$ such that
$x_i<y_i$ and $x_j=y_j$ for $j=i+1,\ldots,n$.}, then the map is order-preserving.

\begin{lemma}\label{lemma gap}
Let $E$ be the generalized IP-set associated to $(k_n)_{n\ge 1}$, $(d_n)_{n\ge 1}$,
and let $p_n=k_1\cdots k_n$. Then
\begin{equation}\label{eq 34}
D(E)=\limsup_{n\rightarrow\infty}\dfrac{\log p_n}{\log{k_nd_n}}\cdot
\end{equation}
\end{lemma}

\begin{proof}
Firstly, note that if $I=(0,\sum_{i=1}^n(k_i-1)d_i]$, then $|E\cap I|=p_n$ and $|I|\le k_nd_n$. Thus
$D(E)\ge\limsup_{n\rightarrow\infty}\dfrac{\log p_n}{\log{k_nd_n}}$.
Now let $I=(M,N]$, say
$$
\left\{\begin{array}{rcl}
M+1&=&x_1d_1+\cdots+x_nd_n\\
N&=&y_1d_1+\cdots+y_nd_n.
\end{array}\right.
$$
As in Lemma \ref{lemma dim=perron}, we can assume that $y_n>x_n$. Let $y_n=x_n+k$, where $0<k<k_n$.
We divide the analysis into three cases:\\

\noindent{\bf Case 1.} $M=0$: we have $|E\cap I|\le (k+1)p_{n-1}$ and $|I|\ge kd_n$.
Because the map $k\in\mathbb N\mapsto\frac{\log{(k+1)p_{n-1}}}{\log{kd_n}}$ is increasing,
we get that
\begin{align*}
\dfrac{\log{|E\cap I|}}{\log{|I|}}\le\dfrac{\log{(k_n+1)p_{n-1}}}{\log{k_nd_n}}=
(1+o(1))\dfrac{\log{k_np_{n-1}}}{\log{k_nd_n}}=(1+o(1))\dfrac{\log{p_n}}{\log{k_nd_n}}\cdot
\end{align*}

\noindent{\bf Case 2.} $k\ge 2$: we have $|E\cap I|\le(k+1)p_{n-1}$
and $|I|\ge (k-1)d_n$, thus we can proceed as in Case 1.\\

\noindent{\bf Case 3.} $k=1$: Let $P=\sum_{i=1}^{n-1}(k_i-1)d_i+x_nd_n$,
and let $I_1=(M,P]$ and $I_2=(P,N]$. Thus $E\cap I_2=P+E\cap(0,N-P]$, so $|E\cap I_2|,|I_2|$ can be estimated
as in Case 1. Similarly, we estimate $|E\cap I_1|,|I_1|$ (just consider a reflected version of $E$ and
apply Case 1).
Note that either $|I_1|\to+\infty$ or $|I_2|\to+\infty$.\\

\noindent Thus $D(E)\le\limsup_{n\rightarrow\infty}\dfrac{\log p_n}{\log{k_nd_n}}$.
\end{proof}

\section{Regularity and compatibility}\label{section regularity and compatibility}

\subsection{Regular sets}\label{sub regular sets}

\begin{definition}
We call $E\subset\Z$ {\it regular} or $\alpha$-{\it set} if $D(E)=\alpha$ and $0<H_\alpha(E)<\infty$.
\end{definition}

By Lemmas \ref{lemma asymptotic sets} and \ref{lemma dim=perron}, polynomial sets and Cantor sets are regular.
A general set is not regular, but it does contain many regular subsets.

\begin{proposition}\label{prop smaller dimension}
Let $E\subset\Z$ and $0\le\al\le 1$. If $H_\al(E)>0$, then there is a regular subset $E'\subset E$
such that $D(E')=\al$. In particular, if $0\le\al< D(E)$ then there is $E'\subset E$ regular such that
$D(E')=\al$.
\end{proposition}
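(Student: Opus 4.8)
The plan is to handle the case $\al < D(E)$ first (which already covers the ``in particular'' clause and, I expect, is the essential difficulty), and then to deduce the boundary case $\al = D(E)$ from the general statement applied with a slightly smaller exponent together with a diagonal argument. So suppose $H_\al(E) > 0$ with $0 \le \al < 1$; the hypothesis $H_\al(E) > 0$ means there is a sequence of intervals $I_n = (a_n, b_n]$ with $|I_n| \to \infty$ along which $|E \cap I_n| \gg |I_n|^\al$. By passing to a subsequence we may take the $I_n$ pairwise disjoint and with lengths growing as fast as we like, say $|I_{n+1}| > 2(b_n - a_1)$, so that the combinatorics in different $I_n$ do not interfere. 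Inside each $I_n$, enumerate $E \cap I_n = \{x_{i_n+1} < \cdots < x_{j_n}\}$ and select a sub-subset: take $E'_n \subset E \cap I_n$ consisting of exactly the elements $x_{i_n + \lfloor t^{1/\al}\rfloor}$ for $1 \le \lfloor t^{1/\al}\rfloor \le j_n - i_n$, i.e. $E'_n = (E \cap I_n) \ast (E_\al + i_n)$ in the notation of the paper, where $E_\al$ is the set from (\ref{eq 14}). Set $E' = \bigcup_{n \ge 1} E'_n$.

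The point of this choice is that within $I_n$ we have thinned $E$ down by the prescription of $E_\al$, so $|E' \cap I_n| = |E_\al \cap (0, j_n - i_n]| \asymp (j_n - i_n)^\al = |E \cap I_n|^\al \gg |I_n|^{\al^2}$... wait — this would give dimension $\al^2$, not $\al$. The correct selection is instead to choose, inside each $I_n$, a subset $E'_n \subset E \cap I_n$ that is itself \emph{spread out in $I_n$ like an $\al/D_n$-regular set}, where $D_n$ is the ``local dimension'' $\log|E\cap I_n|/\log|I_n|$ (which is $\ge \al$ along our sequence). Concretely, using the argument already given in the paragraph before the Proposition but run locally: set $\be_n = \al/D_n \le 1$ and $E'_n = (E\cap I_n)\ast\big((E_{\be_n} + i_n)\cap(i_n, j_n]\big)$. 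Then $|E'_n| \gg (j_n - i_n)^{\be_n} = |E\cap I_n|^{\be_n} \asymp |I_n|^{D_n \be_n} = |I_n|^\al$, while for \emph{any} interval $J$, intersecting $E'$ with $J$ only picks up the relevant pieces and one checks $|E' \cap J| \ll |J|^\al$ (up to a constant), because $E'_n$ inherits the $H_{\be_n}$-upper bound $\le 1$ of $E_{\be_n}$ and these bounds are uniform in $n$; the disjointness and rapid growth of the $I_n$ ensure at most one or two pieces contribute essentially to any given $J$. This simultaneously gives $H_\al(E') < \infty$ (the uniform upper bound) and $H_\al(E') > 0$ (the lower bound along $I_n$), hence $E'$ is regular with $D(E') = \al$.

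The step I expect to be the real obstacle is the uniform upper bound $|E' \cap J| \ll |J|^\al$ for \emph{all} intervals $J$, not just the $I_n$. One must verify that (a) the constant in $|E_{\be}\cap(M,N]| \le (N-M)^\be$ is in fact $1$ uniformly over $\be \in [0,1]$ — this is exactly the footnoted subadditivity $(x+y)^\be \le x^\be + y^\be$ used in Example 1, so it is genuinely uniform; (b) when $J$ straddles the gap between consecutive intervals $I_n$ and $I_{n+1}$, or sits partly inside one $I_n$, the count $|E' \cap J|$ is controlled by a bounded number of terms of the form $|J \cap I_m|^\al$ plus the full $|I_m|^\al \ll |I_{m+1}|^\al$ for the at most one fully-enclosed earlier piece, and since $|I_{m+1}|$ is tiny compared to $|J| \ge |I_{m+1}|/2$, this is $\ll |J|^\al$ by the $\al \le 1$ subadditivity again. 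Once the finitary bookkeeping of (b) is arranged — which is where the rapid-growth choice of the $I_n$ is spent — the regularity of $E'$ follows. Finally, for the borderline case $\al = D(E)$ with $H_\al(E) > 0$: apply the construction to get, for each $k$, a regular $E^{(k)} \subset E$ with $D(E^{(k)}) = \al - 1/k$ (legitimate once $\al > 0$; if $\al = 0$ the statement is trivial since finite or even bounded-density-zero sets work), and then splice: choose for each $k$ a single interval $I^{(k)}$ witnessing $|E^{(k)} \cap I^{(k)}| \gg |I^{(k)}|^{\al - 1/k}$ with $|I^{(k)}| \to \infty$ fast, and let $E' = \bigcup_k (E^{(k)} \cap I^{(k)})$; the lower exponents $\al - 1/k \to \al$ force $D(E') = \al$, while the uniform upper bounds (each $E^{(k)}$ being regular, hence $H_{\al - 1/k}(E^{(k)}) < \infty$, and $\al - 1/k < \al$) give $|E' \cap J| \ll |J|^\al$, so $E'$ is regular of dimension exactly $\al$.
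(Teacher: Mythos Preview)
Your construction has a genuine gap in the upper-bound step. The claim that ``$E'_n$ inherits the $H_{\be_n}$-upper bound $\le 1$ of $E_{\be_n}$'' is correct, but this is a $\be_n$-bound on the \emph{index} set, not an $\al$-bound on $E'_n$ as a subset of $I_n$. For a subinterval $J\subset I_n$ with $E\cap J=\{x_{p+1},\dots,x_q\}$, the $\ast$-operation gives only $|E'_n\cap J|\le(q-p)^{\be_n}=|E\cap J|^{\be_n}$; to convert this into $|E'_n\cap J|\ll|J|^\al$ you would need $|E\cap J|\ll|J|^{D_n}$ uniformly over $J\subset I_n$, i.e.\ local $D_n$-regularity of $E$ inside $I_n$ --- exactly the hypothesis the proposition is meant to dispense with. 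A concrete failure: take $\al=1/2$, $I_n=(0,N]$, and let $E\cap I_n=\{1,2,\dots,\lfloor N^{1/2}\rfloor\}$. Then $D_n=\al$, $\be_n=1$, the thinning does nothing, and for $J=(0,\lfloor N^{1/2}\rfloor]$ one has $|E'_n\cap J|/|J|^\al\asymp N^{1/4}\to\infty$. The same phenomenon persists when $D_n>\al$: if the $|I_n|^{D_n}$ points of $E\cap I_n$ are packed into a subinterval of length $|I_n|^{D_n}$, the ratio on that subinterval is $\asymp |I_n|^{\al(1-D_n)}\to\infty$. So $H_\al(E')=\infty$ and $E'$ is not regular.

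This is why the paper abandons the $\ast$-construction at this point (it is used only for \emph{regular} $E$ in the paragraph preceding the proposition) and switches to a different device: a dyadic thinning that acts directly on the quantity $s_F(I)=\sup_{J\subset I}|F\cap J|/|J|^\al$. Discarding every second interior element of a finite $F$ roughly halves $s_F(I)$, regardless of how $F$ is distributed inside $I$; iterating brings $s_F(I)$ into a bounded window while retaining a witnessing subinterval of growing length. Your selection by rank in $E\cap I_n$ is blind to clustering; the paper's thinning controls clustering because the supremum over subintervals is built into the quantity being reduced. (Your separate borderline argument for $\al=D(E)$ is also unnecessary once the main construction is fixed: the paper's argument covers all $\al$ with $H_\al(E)>0$ uniformly, splitting only on whether $H_\al(E)$ is finite --- in which case $E'=E$ works --- or infinite.)
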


\begin{proof}
This is an analogue of Theorem 8.19 of \cite{Mattila}, and the idea of the proof is similar in spirit:
we apply a dyadic argument to decrease $H_\al(E)$ in a controlled way.
Given an interval
$I\subset\Z$ and a subset $F\subset\Z$, define
$$s_F(I)\doteq\sup_{J\subset I\atop{J\text{ interval}}}\dfrac{|F\cap J|}{|J|^\al}\,\cdot$$
If $F=\{a_1,a_2,\ldots,a_k\}\subset I$, the operation of alternately discard the elements of $F$,
$$F=\{a_1,a_2,\ldots,a_k\}\rightsquigarrow F'=\{a_1,a_3,a_5,\ldots,a_{2\left\lceil\frac{k-1}{2}\right\rceil-1},a_k\},$$
decreases $s_F(I)$ to approximately $\frac{s_F(I)}{2}$. More specifically,
if $s_F(I)>2$, then $\frac{1}{2}<s_{F'}(I)<s_F(I)-\frac{1}{2}$:
\begin{enumerate}[$\bullet$]
\item For every interval $J\subset I$, it holds
$$\dfrac{|F'\cap J|}{|J|^\al}\le \dfrac{1}{2}\cdot\dfrac{|F\cap J|+1}{|J|^\al}\le\dfrac{s_F(I)}{2}+\dfrac{1}{2}<
s_F(I)-\dfrac{1}{2}\cdot$$
\item If $J$ maximizes $s_F(I)$, then
$$\dfrac{|F'\cap J|}{|J|^\al}\ge\dfrac{1}{2}\cdot\dfrac{|F\cap J|-1}{|J|^\al}>
1-\dfrac{1}{2|J|^\al}\ge\dfrac{1}{2}\,\cdot$$
\end{enumerate}
After a finite number of these operations, we get $F'\subset F$ with
$\frac{1}{2}<s_{F'}(I)\le 2$.

If $H_\al(E)<\infty$, there is nothing to do. Assume that $H_\al(E)=\infty$. We inductively
construct a sequence $F_1\subset F_2\subset\cdots$ of finite subsets of $E$ such that
$F_n\subset I_n=(a_n,b_n]$ with $|I_n|\to+\infty$ and:
\begin{enumerate}[(i)]
\item $\frac{1}{2}<s_{F_n}(I_n)\le 3$.
\item There is an interval $J_n\subset I_n$ such that $|J_n|\ge n$ and
$\frac{|F_n\cap J_n|}{|J_n|^\al}>\frac{1}{2}$.
\end{enumerate}
Once these properties hold, $E'=\bigcup_{n\ge 1}F_n$ will satisfy the required conditions.

Take any $a\in E$ and $I_1=\{a\}$. Assume $I_n,F_n,J_n$ are defined and satisfy (i), (ii).
Because $H_\al(E)=\infty$, there exists an interval $J_{n+1}$ disjoint from
$(a_n-|I_n|^{\frac{1}{\al}},b_n+|I_n|^{\frac{1}{\al}}]$ such that
$$\dfrac{|E\cap J_{n+1}|}{|J_{n+1}|^\al}\ge (n+1)^{1-\al}\,.$$
Thus we can restrict $J_{n+1}$ to a smaller interval of size
at least $n+1$, also denoted $J_{n+1}$, such that
\begin{equation}
s_E(J_{n+1})=\dfrac{|E\cap J_{n+1}|}{|J_{n+1}|^\al}\,\cdot
\end{equation}
Consider $F_{n+1}'=E\cap J_{n+1}$ and apply the dyadic operation to $F_{n+1}'$ until
\begin{equation}\label{eq 22}
\dfrac{1}{2}<s_{F_{n+1}'}(J_{n+1})=\dfrac{|F_{n+1}'\cap J_{n+1}|}{|J_{n+1}|^\al}\le 2.
\end{equation}
Let $I_{n+1}=I_n\cup K_n\cup J_{n+1}$ be the convex hull
of $I_n$ and $J_{n+1}$, and let $F_{n+1}=F_n\cup F_{n+1}'$. Condition (ii) is satisfied because of
(\ref{eq 22}). To prove (i), let $I$ be a subinterval of $I_{n+1}$. We have three cases.
\begin{enumerate}[$\bullet$]
\item $I\subset I_n\cup K_n$: by (i),
$$\dfrac{|F_{n+1}\cap I|}{|I|^\al}\le\dfrac{|F_n\cap(I\cap I_n)|}{|I\cap I_n|^\al}\le 3.$$
\item $I\subset K_n\cup J_{n+1}$: by (\ref{eq 22}),
$$\dfrac{|F_{n+1}\cap I|}{|I|^\al}\le\dfrac{|F_{n+1}'\cap(I\cap J_{n+1})|}{|I\cap J_{n+1}|^\al}\le 2.$$
\item $I\supset K_n$: because $|K_n|\ge |I_n|^{\frac{1}{\al}}$,
\begin{eqnarray*}
\dfrac{|F_{n+1}\cap I|}{|I|^\al}&=&\dfrac{|F_n\cap(I\cap I_n)|+
                                    |F_{n+1}'\cap(I\cap J_{n+1})|}{(|I\cap I_n|+|K_n|+|I\cap J_{n+1}|)^\al}\\
                                &\le&\dfrac{|F_n\cap(I\cap I_n)|}{(|I\cap I_n|+|K_n|)^\al}+
                                    \dfrac{|F_{n+1}'\cap(I\cap J_{n+1})|}{(|I\cap J_{n+1}|)^\al}\\
                                &\le&\dfrac{|I_n|}{|K_n|^{\al}}+s_{F_{n+1}'}(J_{n+1})\\
                                &\le&3.
\end{eqnarray*}
\end{enumerate}
This proves (i) and completes the inductive step.
\end{proof}

\subsection{Compatible sets}\label{sub compatible sets}

\begin{definition}\label{def compatible}
We call two regular subsets $E,F\subset\Z$ {\it compatible} if there are sequences $(I_n)_{n\ge 1}$,
$(J_n)_{n\ge 1}$ of intervals with increasing lengths such that
\begin{enumerate}[(i)]
\item $|I_n|\sim |J_n|$.
\item $|E\cap I_n|\gtrsim |I_n|^{D(E)}$ and $|F\cap J_n|\gtrsim |J_n|^{D(F)}$.
\end{enumerate}
\end{definition}

In other words, two subsets are compatible if they have intervals of comparable lengths such that
their cardinality on these intervals have the correct asymptotic.

\begin{definition}\label{def universal}
We call a regular subset $E\subset\Z$ {\it universal} if there is a sequence $(I_n)_{n\ge 1}$ of
intervals such that $|I_n|\sim n$ and $|E\cap I_n|\gtrsim |I_n|^{D(E)}$.
\end{definition}

Each $E_\al$ is universal, as well as each polynomial sets (see Section \ref{sub polynomials}).
If $E$ is universal and $F$ is regular, then $E,F$ are compatible.
In particular, any two polynomial sets are compatible.

\subsection{A counterexample to Theorem \ref{main thm 1} for regular non-compatible sets}\label{sub counterexample}

Let us show that compatibility is a necessary assumption for Theorem \ref{main thm 1}:
we construct regular sets $E,F\subset\Z$ such that $D(E)+D(F)>1$ and
$E+\lfloor\la F\rfloor$ has zero upper Banach density for every $\la\in\R$. The idea is to construct $E$ and
$F$ such that the intervals $I,J\subset\Z$ on which
$\frac{|E\cap I|}{|I|^{D(E)}}$ and $\frac{|F\cap J|}{|J|^{D(F)}}$ are bounded away from zero have
incomparable lengths.

Let $\al\in\left(\frac{1}{2},1\right)$, and let
$$E=\bigcup_{i\text{ odd}}(E_i\cap I_i)\ \ ,\ \ F=\bigcup_{i\text{ even}}(E_i\cap I_i)$$
such that the following conditions hold:
\begin{enumerate}[(i)]
\item $E_i=\lfloor\mu_i \lfloor {\mu_i}^{-1}E_0\rfloor\rfloor$, where $E_0\subset\N$ with $H_\al(E_0)=\frac{1}{2}$.
\item $\mu_i>b_{i-1}^{\frac{2}{1-\al}}$ for all $i\ge 1$.
\item $(I_i)_{i\ge 1}$ is a disjoint sequence of intervals of increasing lengths such that
$$\lim_{i\rightarrow\infty}\dfrac{|E_i\cap I_i|}{|I_i|^\al}=\dfrac{1}{2}\,\cdot$$
\end{enumerate}
It is clear that we can inductively construct $(\mu_i)_{i\ge 1}, (E_i)_{i\ge 1}$,
$(I_i)_{i\ge 1}$ such that $I_n=(a_n,b_n]$ and $0<b_i<a_{i+1}$. Let us explain conditions (i)--(iii).
(i) gives an $\al$-regular set such that the gap between consecutive elements is at least (of the order of) $\mu_i$.
(ii) implies that $E,F$ are incompatible, and also that the left endpoint of $I_i$ is much
larger than the right endpoint of $I_{i-1}$. (iii) implies that $E,F$ are $\al$-sets.

\begin{lemma}\label{lemma removal}
Let $E,F\subset\Z$ with $D(E),D(F)<1$, $A,B\subset\Z$ finite and $E'=E\cup A$, $F'=F\cup B$. Then
$d^*(E'+\lfloor \la F'\rfloor)=d^*(E+\lfloor \la F\rfloor)$ for all $\la\in\R$.
\end{lemma}

\begin{proof}
$E'+\lfloor \la F'\rfloor=(E+\lfloor \la F\rfloor)\cup (A+\lfloor \la F\rfloor)\cup (E+\lfloor \la B\rfloor)\cup
(A+\lfloor \la B\rfloor)$,
and each of the sets $A+\lfloor \la F\rfloor$, $E+\lfloor \la B\rfloor$, $A+\lfloor \la B\rfloor$
has dimension smaller than one.
\end{proof}

If we fix $\la>0$, then for $i,j$ large enough it holds:
\begin{enumerate}[(i)]
\item[(iv)] $\lfloor\la I_i\rfloor\cap \lfloor\la I_j\rfloor=\emptyset$.
\item[(v)] $b_i>\max\{4\la^{-1},4\la\}^{\frac{1-\al}{1+\al}}$.
\end{enumerate}
By Lemma \ref{lemma removal}, we can delete the intervals $I_i$ for small $i$'s without changing
$d^*(E+\lfloor\la F \rfloor)$. Thus we can assume (iv), (v) for all $i,j$. Under these assumptions,
$$(I_i+\lfloor\la I_j\rfloor)\cap(I_k+\lfloor\la I_l\rfloor)\not=\emptyset\ \iff\ i=k\text{ and }j,l<i\
\text{ or }\ j=l\text{ and }i,k<j.$$
This follows from (ii): if $(I_i+\lfloor\la I_j\rfloor)\cap(I_k+\lfloor\la I_l\rfloor)\not=\emptyset$,
then $\max\{i,j\}=\max\{k,l\}$. Here is another consequence of (ii): if $a,a'\in I_i$ with $a\not=a'$
and $j,j'<i$, then $(a+\lfloor\la I_j\rfloor)\cap(a'+\lfloor\la I_{j'}\rfloor)=\emptyset$. Indeed,
$|a-a'|\ge\mu_i\gg\la b_{i-1}$. Thus
$$
(E+\lfloor\la F\rfloor)\cap(a_i,a_{i+1}]=\left\{
\begin{array}{ll}
\displaystyle\bigsqcup_{a\in I_i\atop{j=2,4,\ldots,i-1}}(a+\lfloor\la I_j\rfloor)&,\text{ if }i\text{ is odd}\\
&\\
\displaystyle\bigsqcup_{b\in I_i\atop{j=1,3,\ldots,i-1}}(I_j+\lfloor\la b\rfloor)&,\text{ if }i\text{ is even.}\\
\end{array}\right.
$$
Here is a consequence of (v): if $i$ is odd, $a,a'\in I_i$ with $a\not=a'$ and $j,k<i$ are even, then the gap
between $a+\lfloor\la I_j\rfloor$ and $a'+\lfloor\la I_k\rfloor$ has length at least $\frac{|a-a'|}{2}$;
if $i$ is even, $b,b'\in I_i$ with $b\not=b'$ and $j,k<i$ are odd, then the gap between $I_j+\lfloor\la b\rfloor$ and
$I_k+\lfloor\la b'\rfloor$ has length at least $\frac{|b-b'|}{4\la}$.

Now we prove that $d^*(E+\lfloor\la F\rfloor)=0$.
Let $I=(M,N]\subset\Z$ with $M+1,N\in E+\lfloor \la F\rfloor$, say
$$
\left\{
\begin{array}{l}
M+1=a'+\lfloor\la b'\rfloor\in I_k+\lfloor\la I_l\rfloor\\
N  =a+\lfloor\la b\rfloor\in I_i+\lfloor\la I_j\rfloor\,.\\
\end{array}\right.
$$
We have $\max\{i,j,k,l\}=i$ or $j$. Without loss of generality\footnote{The reverse
case is symmetric, because $E+\lfloor\la F\rfloor$ is basically
$\lfloor\la(F+\lfloor\la^{-1}E\rfloor)\rfloor$ and, with this interpretation,
the roles of $I_i$ and $I_j$ are interchanged.}, assume that $i>j$. We have two cases:\\

\noindent {$\bullet$} $a=a'$: $r+\lfloor\la s\rfloor\in(E+\lfloor\la F\rfloor)\cap I$
iff $r=a$ and $s\in F\cap[b',b]$, thus
\begin{equation}\label{eq 23}
\dfrac{|(E+\lfloor\la F\rfloor)\cap I|}{|I|}\le \dfrac{|b-b'+1|^\al}{|\lfloor\la b\rfloor-\lfloor\la b'\rfloor|}
\sim\dfrac{|b-b'|^\al}{\la|b-b'|}=\dfrac{1}{\la|b-b'|^{1-\al}}\,\cdot
\end{equation}

\noindent {$\bullet$} $a>a'$: if $r+\lfloor\la s\rfloor\in(E+\lfloor\la F\rfloor)\cap I$, then
$r\in E\cap[a',a]$ and $s\in I_2\cup I_4\cup\cdots\cup I_{i-1}$, thus
\begin{equation}\label{eq 24}
\dfrac{|(E+\lfloor\la F\rfloor)\cap I|}{|I|}\le \dfrac{b_{i-1}|E\cap[a',a]|}{\frac{|a-a'|}{2}}
\sim\dfrac{b_{i-1}|a-a'|^\al}{|a-a'|}\le\dfrac{b_{i-1}}{{\mu_i}^{1-\al}}\le \dfrac{1}{b_{i-1}}\ ,
\end{equation}
where in the last inequality we used (ii).

By (\ref{eq 23}) and (\ref{eq 24}), it follows that $d^*(E+\lfloor\la F\rfloor)=0$.

\subsection{Another counterexample to Theorem \ref{main thm 1}}\label{sub counterexample 2}

We now prove that the set of parameters in Theorem \ref{main thm 1} cannot be $\mathbb Z$:
we construct $E\subset\Z$ regular such that $D(E)=D(E+\la E)$
for all $\la\in\Z$.

Given $\al\in\left(\frac{1}{2},1\right)$ and $c\in\Z$, let $E(\al,c)$ be the
generalized IP-set associated to the sequences $k_n=c2^n$ and
$d_n=\left\lfloor 2^{n^2/2\al}\right\rfloor$. If $n$ is large enough, then
$$\sum_{i=1}^{n-1}k_i d_i\le c\sum_{i=1}^{n-1}2^{\frac{i^2}{2\al}+i}\le
c\sum_{j=1}^{\frac{(n-1)^2}{2\al}+n-1}2^j<c2^{\frac{(n-1)^2}{2\al}+n}<d_n.$$
By Lemma \ref{lemma gap}, $E(\al,c)$ has counting dimension $\al$. Let $E=E(\al,1)$.
Thus $E+\la E=E(\al,\la+1)$ has counting dimension $\al$.
If $E$ is regular, we are done. If not, we apply Proposition \ref{prop smaller dimension}
to get a regular subset of $E$ yet with counting dimension greater than $\frac{1}{2}$.

\section{Proofs}\label{section marstrand thm}

Fix $E,F\subset\Z$ regular and compatible. Throughout the proof, we fix a compact interval
$\Lambda\subset(0,+\infty)$. Given distinct points $z=(a,b)$ and $z'=(a',b')$ of $E\times F$, let
$$\Lambda_{z,z'}=\left\{\la\in\Lambda:a+\lfloor \la b\rfloor=a'+\lfloor\la b'\rfloor\right\}.$$
Clearly, $\Lambda_{z,z'}$ is empty if $b=b'$.

\begin{lemma}\label{lemma estimate lambda}
If $z=(a,b),z'=(a',b')$ are distinct points of $\Z^2$ and $\Lambda_{z,z'}\not=\emptyset$,
then:
\begin{enumerate}[(a)]\label{eq 5}
\item $m(\Lambda_{z,z'})\lesssim|b-b'|^{-1}$.
\item $|b-b'|\min\Lambda-1\le|a-a'|\le|b-b'|\max\Lambda+1$.
\end{enumerate}
\end{lemma}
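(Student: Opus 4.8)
The plan is to unwind the definition of $\Lambda_{z,z'}$ and control the floor functions by elementary estimates. First I would write the defining equation $a+\lfloor\la b\rfloor=a'+\lfloor\la b'\rfloor$ as $\lfloor\la b\rfloor-\lfloor\la b'\rfloor=a'-a$, and use the fact that $\lfloor x\rfloor=x-\{x\}$ with $\{x\}\in[0,1)$ to get $\la(b-b')=(a'-a)+\theta$ for some $\theta=\{\la b\}-\{\la b'\}\in(-1,1)$. Since $b\neq b'$, this yields the explicit formula $\la=\dfrac{(a'-a)+\theta}{b-b'}$, so every $\la\in\Lambda_{z,z'}$ lies in an interval of length at most $2/|b-b'|$; intersecting with $\Lambda$ gives part (a), namely $m(\Lambda_{z,z'})\le 2|b-b'|^{-1}\ll|b-b'|^{-1}$.

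For part (b), I would use that the set $\Lambda_{z,z'}$ is assumed nonempty, so pick $\la\in\Lambda_{z,z'}$; then $\min\Lambda\le\la\le\max\Lambda$, and from $\la(b-b')=(a'-a)+\theta$ with $|\theta|<1$ we get $|a-a'|=|\la(b-b')-\theta|$, hence $|\la||b-b'|-1<|a-a'|<|\la||b-b'|+1$. Bounding $|\la|$ below by $\min\Lambda$ in the left inequality and above by $\max\Lambda$ in the right inequality (both valid since $\Lambda\subset(0,\infty)$ and $\la\in\Lambda$) gives exactly $\min\Lambda\cdot|b-b'|-1<|a-a'|<\max\Lambda\cdot|b-b'|+1$.

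There is essentially no hard step here; the lemma is a bookkeeping estimate. The only point requiring a little care is that the ``error'' $\theta=\{\la b\}-\{\la b'\}$ must be handled uniformly over the (at most two) possible values of each floor, so that the bound $|\theta|<1$ holds for \emph{every} admissible $\la$, not just one; this is immediate since each fractional part lies in $[0,1)$ and their difference lies in $(-1,1)$. I would also remark that the implied constant in (a) can be taken to be $2$, independently of $z,z'$ and of $\Lambda$, which is what is needed later in the double-counting argument.
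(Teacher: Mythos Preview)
Your proof is correct and follows essentially the same route as the paper's: both arguments reduce the condition $a+\lfloor\la b\rfloor=a'+\lfloor\la b'\rfloor$ to the inclusion $\la(b-b')\in(a'-a-1,a'-a+1)$ (you phrase this via fractional parts $\theta=\{\la b\}-\{\la b'\}$, the paper via the inequalities $n-a\le\la b<n-a+1$), and then read off both (a) and (b) from this containment. The only cosmetic difference is that the paper normalizes to $b>b'$ before dividing, whereas you work with absolute values throughout; the content is identical.
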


\begin{proof}
Assume that $b>b'$, and let $\la\in\Lambda_{z,z'}$, say
$a+\lfloor \la b\rfloor=n=a'+\lfloor\la b' \rfloor$. Thus
$$
\left\{\begin{array}{rcccl}
n-a&\le&\la b&<&n-a+1\\
n-a'&\le&\la b'&<&n-a'+1\\
\end{array}\right.
\ \Longrightarrow\ \ a'-a-1<\la(b-b')<a'-a+1,
$$
so $\Lambda_{z,z'}\subset\left(\frac{a'-a-1}{b-b'}\ ,\frac{a'-a+1}{b-b'}\right)$,
which proves (a). Also:
$$\dfrac{a'-a-1}{b-b'}\le\min\Lambda_{z,z'}\le\max\Lambda\ \ \Longrightarrow\ \ a'-a\le(b-b')\max\Lambda+1$$
and
$$\dfrac{a'-a+1}{b-b'}\ge\max\Lambda_{z,z'}\ge\min\Lambda\ \ \Longrightarrow\ \ a'-a\ge\min(b-b')\Lambda-1.$$
\end{proof}

Lemma \ref{lemma estimate lambda} expresses the crucial property of transversality that is present
in most results related to Marstrand's theorem. By (b), if $\Lambda_{z,z'}\not=\emptyset$ then
$|a-a'|\sim|b-b'|$ .

Let $(I_n)_{n\ge 1}$ and $(J_n)_{n\ge 1}$ be sequences of intervals satisfying Definition \ref{def compatible}.
For each pair $(n,\la)\in\N\times\Lambda$, let
$$N_n(\la)=\left\{((a,b),(a',b'))\in ((E\cap I_n)\times(F\cap J_n))^2:
a+\lfloor\la b\rfloor=a'+\lfloor\la b'\rfloor\right\},$$
and let $\Delta_n=\displaystyle\int_\Lambda |N_n(\la)|dm(\la)$.
By a double counting argument,
\begin{equation}\label{eq 7}
\Delta_n=\sum_{z,z'\in(E\cap I_n)\times(F\cap J_n)}m(\Lambda_{z,z'})\,.
\end{equation}

\begin{lemma}\label{estimate of integral}
Denote $D(E)=\al$ and $D(F)=\be$.
\begin{enumerate}[(a)]
\item If $\al+\be<1$, then $\Delta_n\lesssim|I_n|^{\al+\be}$.
\vspace{.3cm}
\item If $\al+\be>1$, then $\Delta_n\lesssim|I_n|^{2\al+2\be-1}$.
\end{enumerate}
\end{lemma}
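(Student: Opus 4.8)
The plan is to estimate $\Delta_n$ through the double-counting identity \eqref{eq 7}, splitting the sum over pairs $z=(a,b)$, $z'=(a',b')$ in $(E\cap I_n)\times(F\cap J_n)$ according to the value of $k=b-b'$. Pairs with $b=b'$ contribute nothing, since then $\Lambda_{z,z'}=\emptyset$. For $b\neq b'$, Lemma \ref{lemma estimate lambda}(a) gives $m(\Lambda_{z,z'})\ll |k|^{-1}$, so it remains to count, for each fixed $k\neq 0$, how many admissible quadruples $(a,b,a',b')$ occur. The number of pairs $(b,b')\in (F\cap J_n)^2$ with $b-b'=k$ is at most $|F\cap J_n|$; and since $|J_n|\asymp|I_n|$ and $F$ is regular with $D(F)=\be$, relation \eqref{eq 21} (applied with an exponent slightly above $\be$, then absorbed) lets us bound this by $\ll |I_n|^{\be}$. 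Moreover, by Lemma \ref{lemma estimate lambda}(b), a nonempty $\Lambda_{z,z'}$ forces $|a-a'|\asymp|k|$, so in particular $|k|\ll |I_n|$ (as $a,a'$ range over the bounded-length interval $I_n$, translated); thus only values $|k|\le C|I_n|$ need be considered. Finally, for fixed $a$ and fixed $k$, the constraint $|a-a'|\asymp|k|$ and part (b) of Lemma \ref{lemma estimate lambda} pin $a'$ down to an interval of length $O(|k|)$, while $a'$ must also lie in $E\cap I_n$; a more careful bookkeeping (see below) is where the two cases diverge.

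Concretely, I would write
$$\Delta_n=\sum_{k\neq 0}\ \sum_{\substack{(a,b),(a',b')\\ b-b'=k}} m(\Lambda_{z,z'})\ \ll\ \sum_{0<|k|\ll|I_n|}\frac{1}{|k|}\cdot\#\{(a,b,a',b')\in (E\cap I_n)^2\times(F\cap J_n)^2 : b-b'=k,\ a-a'\asymp k\}.$$
For the inner count: choose $b\in F\cap J_n$ ($\ll|I_n|^{\be}$ choices), which determines $b'=b-k$; choose $a\in E\cap I_n$ ($\ll|I_n|^{\al}$ choices); then $a'\in E\cap I_n$ is confined to an interval of length $O(|k|)$ by Lemma \ref{lemma estimate lambda}(b), so there are $\ll \min\{|I_n|^{\al},\, |k|^{\al}+1\}$ choices for $a'$ (using \eqref{eq 21} on the short interval when $|k|$ is small, and the trivial global bound otherwise). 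This yields
$$\Delta_n\ll |I_n|^{\al+\be}\sum_{0<|k|\ll|I_n|}\frac{\min\{|I_n|^{\al},|k|^{\al}+1\}}{|k|}.$$
If $\al+\be<1$, the dominant behavior of the sum $\sum_{|k|\ll|I_n|} |k|^{\al-1}$ is $\ll |I_n|^{\al}$; but then one sees the naive bookkeeping above is slightly lossy, and the correct split is to bound $\#\{a'\}$ by $1$ when counting via $\Lambda_{z,z'}$ directly — i.e. reverse the roles and note that once $a,b,b'$ are fixed, the equation $a+\lfloor\la b\rfloor=a'+\lfloor\la b'\rfloor$ together with $\la$ ranging over an interval of length $\ll|k|^{-1}$ forces $a'$ into a set of size $O(1)$ for each such $\la$, so summing $m(\Lambda_{z,z'})\ll|k|^{-1}$ over the $\ll|I_n|^{\al}$ values of $a$ and $\ll|I_n|^{\be}$ values of $b$ and over $0<|k|\ll|I_n|$ with the constraint $|k|\asymp|a-a'|$ collapses the $k$-sum to $O(\log)$ or better — after regrouping by $|a-a'|$ one gets $\Delta_n\ll |I_n|^{\al+\be}$. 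If $\al+\be>1$, we instead keep the $\min\{|I_n|^{\al},|k|^{\al}\}$ bound: the sum $\sum_{|k|\ll|I_n|}|k|^{\al-1}$ is then $\asymp |I_n|^{\al}$ is still the wrong order; rather one writes $\#\{b\}\cdot\#\{a\}\cdot\#\{a'\}\ll |I_n|^{\be}\cdot|I_n|^{\al}\cdot(|k|^{\al}+1)$ is too crude, so here one exploits that summing $|k|^{-1}\cdot|k|^{\al}=|k|^{\al-1}$ over $|k|\le|I_n|$ gives $|I_n|^{\al}$, producing $|I_n|^{2\al+\be}$ — which is $\le|I_n|^{2\al+2\be-1}$ precisely because $\be\ge 1-\be$ fails in general; the honest route is to also use regularity of $E$ to bound $\#\{(a,a'): a-a'=l\}\ll|I_n|^{\al}$ and $\#\{(b,b'):b-b'=k\}\ll|I_n|^{\be}$ simultaneously with the relation $|l|\asymp|k|$, and sum $|k|^{-1}$ over $0<|k|\ll|I_n|$, getting the logarithmic factor absorbed into the exponent gap, whence $\Delta_n\ll|I_n|^{\al+\be}\cdot|I_n|^{\al+\be-1}=|I_n|^{2\al+2\be-1}$.

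The main obstacle is organizing the case split cleanly: in the subcritical case $\al+\be<1$ one must avoid over-counting by summing $m(\Lambda_{z,z'})$ (which is $\ll|k|^{-1}$, a genuinely small quantity) rather than multiplying raw cardinalities, so that the extra factor $|k|^{-1}$ cancels exactly the growth in the number of pairs at distance $k$ and the total stays at $|I_n|^{\al+\be}$; whereas in the supercritical case $\al+\be>1$ the relevant sum $\sum_{0<|k|\ll|I_n|}|k|^{\al+\be-2}$ is dominated by the top of the range and contributes the additional factor $|I_n|^{\al+\be-1}$, giving the stated $|I_n|^{2\al+2\be-1}$. Everything else — the passage from $D$ to the pointwise bound \eqref{eq 21}, the comparison $|I_n|\asymp|J_n|$, and the transversality input $|a-a'|\asymp|b-b'|$ from Lemma \ref{lemma estimate lambda}(b) — is routine and already available in the excerpt.
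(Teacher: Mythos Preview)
Your proposal assembles the right ingredients --- equation~\eqref{eq 7}, the transversality estimates of Lemma~\ref{lemma estimate lambda}, and the regularity bound~\eqref{eq 21} --- but the bookkeeping never closes, and the several attempted repairs are hand-waving rather than arguments. The concrete gap is this: by fixing $k=b-b'$ as an \emph{exact} value, you are forced to bound the number of admissible pairs $(b,b')$ by the \emph{global} count $|F\cap J_n|\ll|I_n|^{\beta}$, and you never use the \emph{local} regularity of $F$. With a local bound $|k|^{\alpha}$ on the $a'$-count but only a global bound on the $b$-side, your displayed estimate gives at best
\[
\Delta_n\ \ll\ |I_n|^{\alpha+\beta}\sum_{1\le |k|\ll|I_n|}|k|^{\alpha-1}\ \ll\ |I_n|^{2\alpha+\beta},
\]
which is strictly weaker than $|I_n|^{2\alpha+2\beta-1}$ whenever $\beta<1$, and is useless for part~(a). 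Your ``honest route'' at the end uses only the global bounds $\#\{(a,a'):a-a'=l\}\ll|I_n|^{\alpha}$ and $\#\{(b,b'):b-b'=k\}\ll|I_n|^{\beta}$ together with $|l|\asymp|k|$; but the latter relation does \emph{not} determine $l$ from $k$, so after summing over the $\asymp|k|$ admissible values of $l$ the factor $|k|^{-1}$ is cancelled and you obtain $|I_n|^{\alpha+\beta+1}$, not $|I_n|^{\alpha+\beta}\log|I_n|$. And even had a genuine logarithm appeared, there is no ``exponent gap'' in the lemma's statement into which to absorb it --- a log factor here would in fact destroy the positive-density conclusion in Part~2 of the proof of Theorem~\ref{main thm 1}.

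The paper's remedy is to decompose \emph{dyadically} in the common scale $|a-a'|\asymp|b-b'|\asymp e^{s}$, $1\le s\le\ln|I_n|$, and to apply the local regularity bound on \emph{both} coordinates symmetrically: for each fixed $(a,b)$ one has $\#\{a'\in E\cap I_n:|a-a'|\asymp e^{s}\}\ll e^{s\alpha}$ and $\#\{b'\in F\cap J_n:|b-b'|\asymp e^{s}\}\ll e^{s\beta}$, while $m(\Lambda_{z,z'})\ll e^{-s}$. Each shell thus contributes $\ll e^{s(\alpha+\beta-1)}$, and after multiplying by the $\ll|I_n|^{\alpha+\beta}$ choices of $(a,b)$ one is left with the geometric series $\sum_{s=1}^{\ln|I_n|}(e^{\alpha+\beta-1})^{s}$, which is $O(1)$ when $\alpha+\beta<1$ and $\asymp|I_n|^{\alpha+\beta-1}$ when $\alpha+\beta>1$. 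Your closing paragraph gestures toward precisely this sum (``$\sum|k|^{\alpha+\beta-2}$''), but it cannot be reached from your setup once $k$ has been pinned to an exact difference rather than a dyadic scale.
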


\begin{proof}
By (\ref{eq 7}),
\begin{eqnarray*}
\Delta_n&=&\sum_{z,z'\in(E\cap I_n)\times(F\cap J_n)}m(\Lambda_{z,z'})\\
    &=&\sum_{a\in E\cap I_n\atop{b\in F\cap J_n}}\sum_{s=1}^{\log |I_n|}\sum_{a'\in E\cap I_n\atop{|a-a'|\sim e^s}} \sum_{b'\in F\cap J_n\atop{|b-b'|\sim e^s}}m(\Lambda_{z,z'})\\
    &\lesssim&\sum_{a\in E\cap I_n\atop{b\in F\cap J_n}}\sum_{s=1}^{\log |I_n|}e^{-s}(e^{s})^\al(e^{s})^\be\\
    &=&\sum_{a\in E\cap I_n\atop{b\in F\cap J_n}}\sum_{s=1}^{\log |I_n|}(e^s)^{\al+\be-1}\\
    &\lesssim& |I_n|^{\al+\be}\sum_{s=1}^{\log |I_n|}\left(e^{\al+\be-1}\right)^s,
\end{eqnarray*}
thus
$$
\Delta_n\lesssim\left\{
\begin{array}{ll}
|I_n|^{\al+\be}|I_n|^{\al+\be-1}=|I_n|^{2\al+2\be-1}&\text{, if }\al+\be>1,\\
&\\
|I_n|^{\al+\be}&\text{, if }\al+\be<1.
\end{array}\right.
$$
\end{proof}

\subsection{Proof of Theorem \ref{main thm 1}}
We divide the proof into three parts.\\

\noindent {\bf Part 1.} $\al+\be<1$: fix $\ve>0$ and $n\ge 1$. By Lemma \ref{estimate of integral}, the
set of parameters $\la\in\Lambda$ such that
\begin{equation}\label{eq 9}
|N_n(\la)|\lesssim\dfrac{|I_n|^{\al+\be}}{\ve}
\end{equation}
has Lebesgue measure at least $m(\Lambda)-\ve$. We will prove that
\begin{equation}\label{eq 10}
\dfrac{|(E+\lfloor\la F\rfloor)\cap(I_n+\lfloor\la J_n\rfloor)|}{|I_n+\lfloor\la J_n\rfloor|^{\al+\be}}\gtrsim \ve
\end{equation}
for every $\la\in\Lambda$ satisfying (\ref{eq 9}). For $(m,n,\la)\in\Z\times\Z\times\Lambda$, let
$$s(m,n,\la)=|\left\{(a,b)\in(E\cap I_n)\times(F\cap J_n):a+\lfloor\la b\rfloor =m\right\}|\,.$$
Thus
\begin{equation}\label{eq 28}
\sum_{m\in\Z} s(m,n,\la)=|E\cap I_n||F\cap J_n|\sim |I_n|^{\al+\be}
\end{equation}
and
\begin{equation}\label{eq 29}
\sum_{m\in\Z} s(m,n,\la)^2=|N_n(\la)|\lesssim \dfrac{|I_n|^{\al+\be}}{\ve}\,\cdot
\end{equation}
The numerator in (\ref{eq 10}) is at least the cardinality of the set $S(n,\la)=\{m\in\Z:s(m,n,\la)>0\}$,
because $(E+\lfloor\la F\rfloor)\cap(I_n+\lfloor\la J_n\rfloor)$ contains $S(n,\la)$.
By the Cauchy-Schwarz inequality and (\ref{eq 28}), (\ref{eq 29}), we have
$$|S(n,\la)|\ge\dfrac{\left(\displaystyle\sum_{m\in\Z} s(m,n,\la)\right)^2}{\displaystyle\sum_{m\in\Z}s(m,n,\la)^2}
\gtrsim\dfrac{\left(|I_n|^{\al+\be}\right)^2}{\dfrac{|I_n|^{\al+\be}}{\ve}}
=\ve|I_n|^{\al+\be}.
$$
Because $|I_n+\lfloor\la J_n\rfloor|\sim |I_n|$, we get that
$$\dfrac{|(E+\lfloor\la F\rfloor)\cap(I_n+\lfloor\la J_n\rfloor)|}{|I_n+\lfloor\la J_n\rfloor|^{\al+\be}}
\gtrsim\dfrac{|S(n,\la)|}{|I_n|^{\al+\be}}\gtrsim\ve\,,$$
establishing (\ref{eq 10}).

For each $n\ge 1$, let $G_\ve^n=\{\la\in\Lambda:(\ref{eq 10})\text{ holds}\}$.
Then $m\left(\Lambda\backslash G_\ve^n\right)\le\ve$, and the same holds for
$G_\ve=\bigcap_{n\ge 1}\bigcup_{l=n}^{\infty}G_\ve^l$.
For each $\la\in G_\ve$, $H_{\al+\be}(E+\lfloor\la F\rfloor)>0$, thus $D(E+\lfloor\la F\rfloor)\ge\al+\be$.
Because $G=\bigcup_{n\ge 1}G_{1/n}\subset\Lambda$ has Lebesgue measure $m(\Lambda)$,
Part 1 is complete.\\

\noindent {\bf Part 2.} $\al+\be>1$: for a fixed $\ve>0$,
Lemma \ref{estimate of integral} implies that the set of parameters $\la\in\Lambda$ such that
$|N_n(\la)|\lesssim\frac{|I_n|^{2\al+2\be-1}}{\ve}$ has Lebesgue measure at least $m(\Lambda)-\ve$.
In this case,
$$
|S(n,\la)|\ge\dfrac{\left(\displaystyle\sum_{m\in\Z} s(m,n,\la)\right)^2}{\displaystyle\sum_{m\in\Z}s(m,n,\la)^2}
\gtrsim\dfrac{\left(|I_n|^{\al+\be}\right)^2}{\dfrac{|I_n|^{2\al+2\be-1}}{\ve}}
=\ve|I_n|
$$
thus
$$\dfrac{|(E+\lfloor\la F\rfloor)\cap(I_n+\lfloor\la J_n\rfloor)|}{|I_n+\lfloor\la J_n\rfloor|}
\gtrsim\dfrac{|S(n,\la)|}{|I_n|}\gtrsim\ve.$$
The Borel-Cantelli argument is analogous to Part 1.\\

\noindent {\bf Part 3.} $\al+\be=1$: let $n\ge 1$. Because $E$ is regular, there is
$E_n\subset E$, regular and compatible
with $F$ such that $D(E)-\frac{1}{n}<D(E_n)<D(E)$. Thus $1-\frac{1}{n}<D(E_n)+D(F)<1$.
By Part 1, there is a set $\Lambda_n$ of full Lebesgue measure such that
$D(E_n+\lfloor\la F\rfloor)\ge 1-\frac{1}{n}$ for all $\la\in\Lambda_n$.
The set $\Lambda=\bigcap_{n\ge 1}\Lambda_n$ has full Lebesgue measure and
$D(E+\lfloor\la F\rfloor)\ge 1$ for all $\la\in\Lambda$.

\subsection{Proof of Theorem \ref{main thm 2}} We divide it into two parts.\\

\noindent{\bf Part 1.} $\sum_{i=0}^k D(E_i)\le 1$: by Theorem \ref{main thm 1},
$$D(E_0+\lfloor\la_1 E_1\rfloor)\ge D(E_0)+D(E_1)\ ,\ \ \ m\text{-a.e. }\la_1\in\R.$$
To  each of these parameters, apply Proposition \ref{prop smaller dimension} to obtain a
regular subset $F_{\la_1}\subset E_0+\lfloor \la_1 E_1\rfloor$ such that
$D(F_{\la_1})=D(E_0)+D(E_1)$. Because $E_2$ is universal, we can apply Theorem \ref{main thm 1}
again and get that
$$D(F_{\la_1}+\lfloor\la_2 E_2\rfloor)\ge D(E_0)+D(E_1)+D(E_2)\,,
\ \ \ m\text{-a.e. }\la_2\in\R.$$
By Fubini's theorem,
$$D(E_0+\lfloor\la_1 E_1\rfloor+\lfloor\la_2 E_2\rfloor)\ge D(E_0)+D(E_1)+D(E_2)\,,
\ \ \ m_2\text{-a.e. }(\la_1,\la_2)\in\R^2.$$
Iterating the above arguments, it follows that
$$D(E_0+\lfloor\la_1 E_1\rfloor+\cdots+\lfloor\la_k E_k\rfloor)\ge
D(E_0)+\cdots+D(E_k)\,,\ \ \ m_k\text{-a.e. }(\la_1,\ldots,\la_k)\in\R^k.$$

\vspace{.2cm}
\noindent{\bf Part 2.} $\sum_{i=0}^k D(E_i)>1$: without loss of generality, we can assume that
$$D(E_0)+\cdots+D(E_{k-1})\le 1<D(E_0)+\cdots+D(E_{k-1})+D(E_k)\,.$$
By Part 1,
$$D(E_0+\lfloor\la_1 E_1\rfloor+\cdots+\lfloor\la_{k-1}E_{k-1}\rfloor)\ge
D(E_0)+\cdots+D(E_{k-1})$$
for $m_{k-1}\text{-a.e. }(\la_1,\ldots,\la_{k-1})\in\R^{k-1}$.
To each of these $(k-1)$-tuples, let
$F_{(\la_1,\ldots,\la_{k-1})}$ $\subset E_0+\cdots+\lfloor \la_{k-1}E_{k-1}\rfloor$ regular with
$D\left(F_{(\la_1,\ldots,\la_{k-1})}\right)=D(E_0)+\cdots+D(E_{k-1})$.
Because $D(F_{(\la_1,\ldots,\la_{k-1})})+D(E_k)>1$, Theorem \ref{main thm 1} gives
$d^*\left(F_{(\la_1,\ldots,\la_{k-1})}+\lfloor\la_k E_k\rfloor\right)>0$ for $m\text{-a.e. }\la_k\in\R$.
By Fubini's theorem, the proof is complete.

\section{Concluding remarks}\label{section final remarks}

We think there is a more specific way of defining the counting dimension that encodes the conditions
of regularity and compatibility. A natural candidate would be a prototype of a Hausdorff dimension,
where one looks to all covers, properly renormalized in the unit interval, and takes a $\liminf$.
An alternative definition appeared in \cite{N}. It would be a natural program to prove
Marstrand type results in this context.

Another interesting question is to consider arithmetic sums $E+\la F$, where $\la\in\Z$. These are genuine
arithmetic sums and, as we saw in Section\ref{sub counterexample 2}, their dimension may not increase.
We think very strong conditions on the sets $E,F$ are needed to prove analogous results about $E+\la F$
for $\la\in\Z$.

We also think the results obtained here also work to subsets of $\Z^k$. Given $E\subset\Z^k$, the
{\it upper Banach density} of $E$ is equal to
$$d^*(E)=\limsup_{|I_1|,\ldots,|I_k|\rightarrow\infty}\dfrac{|E\cap(I_1\times\cdots\times I_k)|}{|I_1\times\cdots\times I_k|}\,,$$
where $I_1,\ldots,I_k$ run over all intervals of $\Z$, the {\it counting dimension} of $E$ is
$$D(E)=\limsup_{|I_1|,\ldots,|I_k|\rightarrow\infty}
\dfrac{\log|E\cap(I_1\times\cdots\times I_k)|}{\log|I_1\times\cdots\times I_k|}\,,$$
where $I_1,\ldots,I_k$ run over all intervals of $\Z$ and, for $\al\ge 0$, the {\it counting $\al$-measure}
of $E$ is
$$H_\al(E)=\limsup_{|I_1|,\ldots,|I_k|\rightarrow\infty}\dfrac{|E\cap(I_1\times\cdots\times I_k)|}{|I_1\times\cdots\times I_k|^\al}\,,$$
where $I_1,\ldots,I_k$ run over all intervals of $\Z$. These quantities satisfy similar properties to those
in Section \ref{sub definitions}. The notion of regularity is defined in an analogous manner.
For compatibility, we take into account the geometry of $\Z^k$.
Two regular subsets $E,F\subset\Z^k$ are compatible if there exist sequences of
boxes $R_n=I_1^n\times\cdots\times I_k^n$ and $S_n=J_1^n\times\cdots\times J_k^n$ such that
\begin{enumerate}[(i)]
\item $|I_i^n|\sim |J_i^n|$ for every $i=1,2,\ldots,k$, and
\item $|E\cap R_n|\gtrsim |R_n|^{D(E)}$ and $|F\cap S_n|\gtrsim |S_n|^{D(F)}$.
\end{enumerate}
We think the theory developed in this article can be extended to prove that: if $E,F\subset\Z^k$ are two
regular compatible subsets, then
$D(E+\lfloor\la F\rfloor)\ge\min\{1,D(E)+D(F)\}$ for Lebesgue almost every $\la\in\R$; if
$D(E)+D(F)>1$, then $E+\lfloor\lambda F\rfloor$ has positive upper Banach density for Lebesgue almost
every $\lambda\in\R$.

\section*{Acknowledgments}

The authors are thankful to IMPA for the excellent ambient during the preparation of this
manuscript, to Simon Griffiths and Rob Morris for the conversations that gave rise to the counterexample
of Section \ref{sub counterexample 2}, and to the referee for many useful and detailed suggestions.
Y.L. is also grateful to Enrique Pujals for his constant encouragement. During the preparation of this
manuscript, Y.L. was a doctoral student at IMPA. This research was possible due to the support of
J. Palis 2010 Balzan Prize for Mathematics, CNPq-Brazil and Faperj-Brazil.
Y.L. is supported by the Brin Fellowship.

\bibliographystyle{amsplain}

\end{document}